\newtheorem{theorem}{Theorem}
\newtheorem{lemma}{Lemma}
\newtheorem{remark}{Remark}
\newtheorem{proposition}{Proposition}
\newenvironment{keywords}
    {\textbf{Keywords:}}
    {}
\newenvironment{MSC}
    {\textbf{\href{https://mathscinet.ams.org/mathscinet/msc/msc2020.html}{MSC2020}:}}
    {}
\renewcommand*{\Affilfont}{\normalsize\normalfont}
\title{Multilayer hypergraph clustering\\ using the aggregate similarity matrix\footnote{Supported by the Vilho, Yrjö and Kalle Väisälä Foundation of the Finnish Academy of Science and Letters, and the French government through the RISE Academy of UCA$^\text{JEDI}$ Investments in the Future project managed by the National Research Agency (ANR) with the reference number ANR-15-IDEX-0001. This is an updated author version of the paper accepted to
Proceedings of the 18th Workshop on Algorithms and Models for the Web Graph (WAW 2023)
published by Springer. This version has been updated to incorporate the possibility of non-uniform layers.}}
\author[1]{Kalle Alaluusua}
\author[2]{Konstantin Avrachenkov}
\author[2]{B. R. Vinay Kumar}
\author[1]{Lasse Leskel\"{a}}
\affil[1]{ Aalto University, Espoo, Finland}
\affil[ ]{\Affilfont \{\href{mailto:kalle.alaluusua@aalto.fi}{kalle.alaluusua}, \href{mailto:lasse.leskela@aalto.fi}{lasse.leskela}\}@aalto.fi}
\affil[2]{ INRIA, Sophia Antipolis, Valbonne, France}
\affil[ ]{\Affilfont \{\href{mailto:vinay-kumar.bindiganavile-ramadas@inria.fr}{vinay-kumar.bindiganavile-ramadas}, \href{mailto:k.avrachenkov@inria.fr}{k.avrachenkov}\}@inria.fr}
\date{March 17, 2023}
\renewcommand\Affilfont{\itshape\small}
\begin{document}

\maketitle
\thispagestyle{plain}
\pagestyle{plain}

\begin{abstract}
We consider the community recovery problem on a multilayer variant of the hypergraph stochastic block model (HSBM). Each layer is associated with an independent realization of a $d$-uniform HSBM on $N$ vertices. Given the similarity matrix containing the aggregated number of hyperedges incident to each pair of vertices, the goal is to obtain a partition of the $N$ vertices into disjoint communities. In this work, we investigate a semidefinite programming (SDP) approach and obtain information--theoretic conditions on the model parameters that guarantee exact recovery both in the assortative and the disassortative cases. 

\vspace{12pt} \noindent \begin{keywords}
hypergraph SBM, community detection, semidefinite programming, multilayer, clustering, planted partition
\end{keywords}

\noindent \begin{MSC}
05C65, 05C80, 62H30, 90B15, 90C22, 90C35, 94A16
\end{MSC}

\end{abstract}

\section{Introduction}

Traditional network data are observed as interactions between node pairs, represented as a graph, or equivalently as an adjacency matrix.  More refined forms of network data may involve multiple types of higher-order interactions simultaneously involving multiple nodes.  Such a data set can be viewed as a binary array $A_e^{(m)}$ indexed by node sets $e$ and positive integers $m$ so that $A_e^{(m)}=1$ indicates that an interaction of type $m$ occurs among node set $e$. 
The array can also be viewed as a multilayer hypergraph where the entries of the array indicate the presence of hyperedge $e$ in the $m$-th layer.  
Further, the entries could also depend on the communities of the underlying nodes. Stochastic block models (SBMs) are a popular choice for generative models with a community structure for such applications. Hypergraph stochastic block models (HSBMs) introduce hyperedges into SBMs, thus extending their modelling capabilities. In the following, we provide two examples to illustrate the relevance of the model discussed in this work
\begin{enumerate}
    \item \textbf{Table reservations at restaurants}: Consider $M$ restaurants each of which have tables that can accommodate $d$ people. A population of $N$ individuals is divided into two different communities; those who prefer vegan and those who do not. A hyperedge links $d$ individuals with one of the $M$ restaurants indicating that the corresponding $d$ individuals made a reservation at the restaurant at some point of time. Thus, each of the $M$ restaurants correspond to a layer. Naturally, individuals who prefer vegan dishes gravitate towards those restaurants which have more vegan options. But since the choice of a restaurant is made by a group of $d$ individuals who can belong to either community, the probability that a restaurant is visited will depend on the number of individuals in each of the communities. In other words, a subset of $d$ individuals form a hyperedge which is present with probability dependent on the communities of the $d$ individuals.
    \item \textbf{Processor sharing}: $N$ tasks are assigned to $M$ heterogeneous servers each of which has several $d$-core processors that can process $d$ tasks at a time. A subset of $d$ tasks constitute a hyperedge, and is assigned to a processor based on the priorities (high or low) of each of the tasks and the computation capability of the server.
\end{enumerate}

We will next describe a generative model of a hypergraph with $N \ge 1$ nodes and $M \ge 1$ layers, where each hyperedge in layer $m$ has size $d_m \ge 2$. We take $M$ as a constant independent of $N$. The set of nodes is divided into two communities of equal sizes (we assume $N$ is even), and the resulting community structure, denoted by $\bsigma^{(N)}$, is uniformly distributed on the set $\{(\sigma_1,\sigma_2,\cdots,\sigma_N)\in \{-1,+1\}^N : |\{i:\sigma_i=+1\}| = |\{i:\sigma_i=-1\}| \}$.
The community profile of a node set $e$ is defined as a vector $\btau(e) = (\tau_{-1}(e), \tau_{+1}(e))$, where $\tau_k(e)$ is equal to the number of nodes in $e$ with community membership $k$.  We will then sample a multilayer hypergraph on node set $[N] = \{1,\dots,N\}$ so that each node set $e \subset [N]$ of layer $m$ having size $d_m$ and community profile $\btau(e) = \bm{t}$ is linked by a hyperedge in layer $m$ with probability
\begin{align}
 p_{\bm{t}}^{(m)}
 \weq \frac{\alpha_{\bm{t}}^{(m)} \log N}{\binom{N-1}{d_m-1}}\wedge 1,
 \label{eq:regime}
\end{align}
independently of other node sets and layers. Here $\alpha_{\bt}^{(m)}>0$ is a real number independent of $N$. This scaling of the hyperedge probabilities ensures that the expected average degree of each node is $\Theta(\log N)$. References \cite{ahn_lee_suh_2018,chien_lin_wang_2019,Kim_Bandeira_Goemans_2018-07-08} show that the phase-transition for exact recovery occurs in this regime, and this regime is also critical for connectivity in general hypergraph models \cite{Bergman_Leskela_2022}.

The non-uniform multilayer hypergraph can be represented as a binary array $\bA = (A_e^{(m)})$ in which the entries are mutually independent Bernoulli random variables.  The event
$\{A_e^{(m)}=1\}$ has probability $p_{\bm{t}}^{(m)}$ when $\btau(e) = \bt$. To indicate that the pair
$(\bsigma, \bA) = (\bsigma^{(N)}, \bA^{(N)})$ is sampled from the model, we abbreviate $(\bsigma, \bA) \sim \hsbm(N,M,(d_m),(\alpha_{\bt}^{(m)}))$.
We will focus on a \emph{symmetric} model in which
\[
 \alpha_{(r,d_m-r)}^{(m)}
 \weq \alpha_{(d_m-r,r)}^{(m)}
\]
for all $0 \le r \le d_m$ and all $m$. This means that the presence of the hyperedge depends only on the number of nodes of each community rather than the community label.

The problem of community detection is to output an estimate $\bm{\hat{\sigma}}^{(N)}$ of the underlying node communities. The estimate is said to achieve \emph{exact recovery} if, 
\begin{align}
    \lim_{N\rightarrow \infty} \P\left(\bm{\hat{\sigma}}^{(N)} \in \{\pm \bsigma^{(N)}\}\right) = 1.
\label{def:exact_recovery}
\end{align}
where $\P$ is the probability measure associated with $\hsbm(N,M,d_m,(\alpha_{\bt}^{(m)}))$, a generative model for the community structure and the observations.

In this work, the main focus is to study the community recovery problem based on a layer-aggregated similarity matrix $W_{ij} = \sum_m W_{ij}^{(m)}$ where $ (W_{ij}^{(m)}) =: \bW^{(m)}$ is a zero-diagonal matrix with off-diagonal entries
\[
 W_{ij}^{(m)}
 \weq \sum_{e: e \ni i,j} A^{(m)}_e
\]
counting the number of hyperedges in layer $m$ incident to nodes $i$ and $j$.
Community recovery based on the similarity matrix $\bW$ instead of the full data set $\bA$ is motivated by two aspects: privacy and computational tractability.
For example, in the application of table reservations at restaurants, providing the full hypergraph could reveal the frequency a particular individual visits a restaurant. This could violate the privacy of the individual. On the other hand, providing the similarity matrix obfuscates such individual information, since information regarding the restaurants that are visited is not revealed. Additionally, the similarity matrix provides a compact matrix representation of the hypergraph that is easier to manipulate using matrix algebra. Nevertheless, it is clear that the similarity matrix contains less information than the complete hypergraph. 

In this work, we investigate a semidefinite programming (SDP) approach for solving the community detection problem using $\bW$. Our main result in Section~\ref{sec:sdp} provides an information quantity that characterizes the performance of the SDP approach. To be more specific, it gives a sufficient condition relating the different parameters of the model for exact recovery using the SDP technique.  

The paper is organized as follows. In Section~\ref{sec:related}, we discuss recent literature in the area and highlight the key differences in our work. In Section~\ref{sec:sdp}, we describe the semidefinite programming algorithm and state our main result concerning the information theoretic conditions on the parameters of the model that guarantee exact recovery. Section \ref{sec:numerical} contains some simulation results of our algorithm on synthetic data generated using the HSBM model. The proofs of our results are provided in Section~\ref{sec:analysis}. Section~\ref{sec:conclusion} concludes the paper and provides some future directions.

\section{Related work}
\label{sec:related}

The usual paradigm for theoretical research in community detection is to first propose a generative model that captures the application (data) as a graph or a network, followed by the analysis of a clustering algorithm for the proposed model. Community recovery algorithms on stochastic block models (SBMs) have attracted considerable attention in the past. A comprehensive survey of the field is provided in \cite{Abbe_2018_JMLR}, see also a review of recent results on graph clustering in \cite{Avrachenkov_Dreveton_2022}. Our interest in this work is on multilayer hypergraphs. We first provide a brief survey of recent work in clustering on multilayer networks followed by hypergraph SBMs.

A seminal work for multilayer networks is the review article \cite{Kivela_etal_2014}. Subsequently, in \cite{pensky_zhang_2019}, the authors consider estimating the membership for each individual layer in a multilayer SBM. In the special case that memberships do not change, their method works on the normalized aggregate adjacency matrix. The authors in \cite{lei_chen_lynch_2020} establish that increasing the number of network layers guarantees consistent community recovery (by a least squares estimator) even when the graphs get sparser. SBMs with general interactions allow an alternate model for multilayer networks. These are studied in \cite{Avrachenkov_Dreveton_Leskela_2022} where the authors address the community recovery problem using aggregate adjacency matrix as well as the full graph. The authors in \cite{Alaluusua_Leskela_2022} study Bayesian community recovery in a regime where both the number of nodes and the number of layers increase. 
 
With regard to literature on hypergraphs, the hypergraph stochastic block model (HSBM) was first introduced by Ghoshdastidar and Dukkipati in \cite{Ghoshdastidar_Dukkipati_2014} to capture higher order interactions. They also show strong consistency of spectral methods in dense uniform hypergraphs. In subsequent works \cite{Ghoshdastidar_Dukkipati_2015_ICML,ghoshdastidar_dukkipati_2015_AAAI,Ghoshdastidar_Dukkipati_2017}, they extend their results to partial recovery in sparse non-uniform hypergraphs. Some other works on the spectral algorithm for hypergraphs include \cite{ahn_lee_suh_2018,Pal_Zhu_2021}.

The recent work by Zhang and Tan \cite{Zhang_Tan_2023}  considers the general $d$-uniform HSBM with multiple communities. They establish a sharp phase transition for exact recovery when the knowledge of the whole hypergraph is given. They recover results from several previous works including \cite{chien2018community,chien_lin_wang_2019,Kim_Bandeira_Goemans_2018-07-08,Pal_Zhu_2021}. In the process of solving the exact recovery problem, they do show almost exact recovery using only the similarity matrix through a spectral approach. Another general hypergraph model with theoretical guarantees by \cite{zhen_wang_2022} employs a latent space representation of nodes to cover HSBM, non-uniform hypergraphs, and hypergraphs with heterogeneity among nodes.

Some of the other approaches used in the literature for the community recovery problem on hypergraphs are based on modularity \cite{Kamiski_Poulin_Pralat_Szufel_Theberge_2019,Kaminski_Pralat_Theberge_2021,Kaminski_Pralat_Theberge_2022-10-26,kumar2019new,Kumar_et_al_2020}, tensor decomposition \cite{ke2019community,zhen_wang_2022}, random walk based methods \cite{Pal_Zhu_2021,stephan_Zhu_2022,Zhou_et_al_2006}, variational inference \cite{Brusa_Matias_2022}, and approximate message passing \cite{angelini_caltagirone_krzakala_zdeborova_2015,lesieur2017statistical}.

In this work, we investigate the problem of exact recovery on the HSBM through the lens of semidefinite programming (SDP).
Our work is closest in spirit to \cite{Gaudio_Joshi_2022-08-23} and \cite{Kim_Bandeira_Goemans_2018-07-08} that discuss the SDP approach. The SDP formulation (described in Section \ref{sec:sdp}) arises as a relaxation of the computationally hard procedure of finding a nodes' partition with minimum number of edges crossing it. In \cite{Kim_Bandeira_Goemans_2018-07-08}, the authors show that for a $d$-uniform homogeneous HSBM with two equal-sized and symmetric communities, exact recovery using the full hypergraph shows a sharp phase transition behavior. They go on to propose a `truncate-and-relax' algorithm that utilizes the structure of the similarity matrix. An SDP approach then guarantees exact recovery with high probability, albeit in a parameter regime which is slightly sub-optimal. This gap is bridged in \cite{Gaudio_Joshi_2022-08-23} who consider the community recovery problem with the knowledge of only the similarity matrix. Below, we highlight the differences from these previous works:
\begin{enumerate}
    \item The authors in both \cite{Gaudio_Joshi_2022-08-23} and \cite{Kim_Bandeira_Goemans_2018-07-08} consider the homogeneous model in which the hyperedge parameters take just two values corresponding to all nodes of a hyperedge being in the same community, and at least one of them being in a different community. Related works with the same assumption include \cite{ahn_lee_suh_2018,cole_zhu_2020,lee_kim_chung_2020}. In this work, we allow for hyperedge parameters to depend on the number of nodes of each community in the hyperedge resulting in an inhomogeneous HSBM as in \cite{Zhang_Tan_2023}, albeit with the symmetric assumption. A similar assumption is present in other works such as \cite{Ghoshdastidar_Dukkipati_2014,ghoshdastidar_dukkipati_2015_AAAI} as well.

    \item Much of the earlier work assumes that the data is 
     \textit{assortative} or \textit{homophilic}, i.e. nodes in the same community are more likely to be adjacent to each other than to nodes in different communities.
     Our results incorporate the \textit{disassortative} or \textit{heterophilic} case where the opposite is true. This could be of interest for some applications: reputation of a research institute is partly assessed based on the amount of collaboration with experts from external institutions (see e.g. \cite{guerrero_2020}); so, one might expect certain research networks to be disassortative.

    \item Our model targets multilayer HSBMs that can be seen as a generalization of previous models. Moreover, these layers could individually be assortative or disassortative which could then capture a plethora of applications.
\end{enumerate}

\section{Algorithm and main results}
\label{sec:sdp}
A first approach to obtain an estimate of the node communities given the similarity matrix $W$ is to solve the min-bisection problem:
\begin{align}
    \max \sum_{i,j} W_{ij} x_ix_j \hspace{0.5cm}\text{ subject to } \bx\in \{\pm 1\}^N, \1^T\bx = 0.
    \label{alg:min-bisection}
\end{align}
This formulation assumes that the data is assortative. In the disassortative case the opposite is true, and we replace the maximization in \eqref{alg:min-bisection} with minimization, or equivalently change the sign of the objective function. However, the min-bisection problem is known to be NP-hard in general (see \cite{kim2017community}), which is why \cite{Kim_Bandeira_Goemans_2018-07-08} introduces a semidefinite programming (SDP) relaxation of \eqref{alg:min-bisection}. Algorithm \ref{alg:sdp} introduces an additional input $s \in \{\pm 1\}$ and generalizes their relaxation to both assortative ($s = +1$) and disassortative ($s = -1$) cases.

\begin{algorithm}
\algrenewcommand\algorithmicrequire{\textbf{Input:}}
\algrenewcommand\algorithmicensure{\textbf{Output:}}
\caption{ }
\begin{algorithmic}[1]
\Require $N\times N$ similarity matrix $\bW$ and $s \in \{\pm 1\}$.
\Ensure Community estimate $\bm{\hat{\sigma}}$\\
Solve the following optimization problem:
\begin{align}
    \begin{split}
    \text{maximize}
    &\sum_{0 \leq i < j\le N} s W_{ij}X_{ij}\\
    \text{subject to} 
    & \sum_{0 \leq i \leq j\le N} X_{ij} = 0,\\
    & X_{ii} = 1 \text{ for all } i \in [N]\\
    & \bX \succeq 0.
    \end{split}
    \label{eq:sdp}
\end{align}\\
Let $\bX^*\in \R^{N\times N}$ be the optimal solution of \eqref{eq:sdp} and let it have an eigendecomposition $\bX^* = \sum_{i = 1}^N \lambda_i \bm{v_i}\bm{v_i}^\text{T}$ with $\lambda_1 \geq \lambda_2 \geq \dots \geq \lambda_N$.\\
Output $\bm{\hat\sigma} = \operatorname{sgn}(\bm{v_1})$
\end{algorithmic}
\label{alg:sdp}
\end{algorithm}

\begin{remark}
An alternate approach in the disassortative case is to consider the complement of the hypergraph, which is assortative. A similarity matrix of the complement is given by $\binom{N - 2}{d - 2} (\bm 1 \bm 1^{\text{T}} - I) - \bW$. However, owing to our scaling assumption in \eqref{eq:regime}, the resulting similarity matrix of the complement is no longer in the same regime, and requires a different approach to analyze.
\end{remark}

To capture the level of assortativity of our model, we define the following quantity, accordingly referred to as the \emph{assortativity}
\begin{equation}
 \label{eq:Assortativity}
 \xi \ \coloneqq \ \sum_{m=1}^M 2^{-(d_m-1)} \sum_{r = 0}^{d_m-1} \binom{d_m-1}{r}(d_m-1-2r)\alpha_{(r, d_m - r)}^{(m)}.
\end{equation}
The summation over the different layers implies that the full hypergraph can be assortative even if individual layers $\bW^{(m)}$ are not. Table~\ref{tab:Assortativity} specifies Formula \eqref{eq:Assortativity} for selected values of $d \coloneqq d_m$. The following proposition (proved in Section \ref{sec:assortativity}) states that $\xi$ is a normalized expected difference between the number of hyperedges shared between two nodes when they are of the same community and when they are of different communities.
\begin{proposition}
\label{prop:assortativity_W}
For $i \neq j$, let $w_{\rm in} = \E[W_{ij}|\sigma_i = \sigma_j]$ and $w_{\rm out} = \E[W_{ij}|\sigma_i \neq \sigma_j]$. Then,
$$w_{\rm{in}} - w_{\rm{out}} = \frac{\log N}{N} \xi  + o\left(\frac{\log N}{N}\right).$$
\end{proposition}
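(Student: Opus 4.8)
The plan is to use linearity of expectation to peel the statement down to a single layer and a single hyperedge, and then to recognize the resulting combinatorial sum as the $m$-th summand of $\xi$. Since $W_{ij}=\sum_m\sum_{e\ni i,j}A_e^{(m)}$, we have $w_{\mathrm{in}}-w_{\mathrm{out}}=\sum_m\big(w_{\mathrm{in}}^{(m)}-w_{\mathrm{out}}^{(m)}\big)$ with $w_{\mathrm{in}}^{(m)}=\E[W_{ij}^{(m)}\mid\sigma_i=\sigma_j]$ and likewise for $w_{\mathrm{out}}^{(m)}$, so it suffices to treat one layer and sum at the end. Fixing a layer and writing $d=d_m$, there are exactly $\binom{N-2}{d-2}$ node sets $e$ of size $d$ containing both $i$ and $j$, and by exchangeability each contributes the same conditional mean, so
\[
w_{\mathrm{in}}^{(m)}=\binom{N-2}{d-2}\,\frac{\log N}{\binom{N-1}{d-1}}\,\E\big[\alpha^{(m)}_{\btau(e)}\mid\sigma_i=\sigma_j\big]=\frac{(d-1)\log N}{N-1}\,\E\big[\alpha^{(m)}_{\btau(e)}\mid\sigma_i=\sigma_j\big],
\]
using $\binom{N-2}{d-2}/\binom{N-1}{d-1}=(d-1)/(N-1)$. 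For large $N$ the truncation $\wedge 1$ in \eqref{eq:regime} is inactive (its argument is $\Theta(\log N/N^{d-1})\to 0$) and may be dropped.

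Next I would pin down the conditional law of the profile $\btau(e)$. After conditioning on $\sigma_i,\sigma_j$, the only remaining randomness comes from the labels of the $d-2$ other nodes of $e$, drawn without replacement from the $N-2$ remaining vertices. Under the balanced planted partition these labels are asymptotically i.i.d.\ uniform on $\{\pm1\}$: the relevant hypergeometric law converges to $\mathrm{Bin}(d-2,1/2)$ with $O(1/N)$ error, uniformly over the finitely many values of $\alpha^{(m)}$. Letting $R\sim\mathrm{Bin}(d-2,1/2)$ count the $-1$ labels among these nodes, the in-configuration ($\sigma_i=\sigma_j=+1$) gives $\btau(e)=(R,d-R)$, while the out-configuration ($\sigma_i=+1,\sigma_j=-1$) flips one label to give $\btau(e)=(R+1,d-1-R)$. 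Thus, to leading order,
\[
w_{\mathrm{in}}^{(m)}-w_{\mathrm{out}}^{(m)}=\frac{(d-1)\log N}{N}\Big(\E\big[\alpha^{(m)}_{(R,\,d-R)}\big]-\E\big[\alpha^{(m)}_{(R+1,\,d-1-R)}\big]\Big)+o\!\Big(\frac{\log N}{N}\Big).
\]

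The final step is the combinatorial collapse. I would write both expectations as sums over $r=\tau_{-1}(e)\in\{0,\dots,d-1\}$ with binomial weights $2^{-(d-2)}\binom{d-2}{r}$; the two sums then combine into a single sum in which the coefficient of $\alpha^{(m)}_{(r,d-r)}$ is the difference $\binom{d-2}{r}-\binom{d-2}{r-1}$ (with $\binom{d-2}{-1}=\binom{d-2}{d-1}=0$). The identity
\[
\binom{d-2}{r}-\binom{d-2}{r-1}=\frac{d-1-2r}{d-1}\binom{d-1}{r}
\]
cancels the prefactor $(d-1)$ and reproduces the weight $\binom{d-1}{r}(d-1-2r)$ appearing in \eqref{eq:Assortativity}, so that the layer-$m$ contribution matches the $m$-th summand of $\xi$ up to the overall normalizing constant, which I would fix in the bookkeeping step below. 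Summing over $m$ yields the proposition.

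The conceptual content here is light: the statement is essentially a first-moment computation, and the only genuine care is needed in two bookkeeping steps, which is where I expect any difficulty to lie. First, one must verify that replacing the without-replacement (hypergeometric) sampling under the balanced-partition constraint by i.i.d.\ uniform labels, together with dropping the $\wedge 1$ truncation, perturbs the leading term only at order $o(\log N/N)$; this is exactly where the fact that $d_m$ is a fixed, $N$-independent constant is used. Second, the constant multiplying $\xi$ must be tracked carefully through the binomial normalization $2^{-(d-2)}$, the prefactor $(d-1)/N$, and the identity above; this is the most error-prone part, and the natural safeguard is to check the outcome against small cases such as $d_m=2$, where $W_{ij}^{(m)}$ reduces to a single Bernoulli edge and the expressions can be evaluated directly.
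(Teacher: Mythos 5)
Your argument is conceptually correct at every step, and it takes a genuinely different route from the paper. The paper never attacks $w_{\rm in}-w_{\rm out}$ head-on: it computes $\E D_{ii}$ (with $D_{ii}$ as in \eqref{eq:DefD}) in two ways --- once from the definition, which for balanced communities gives $\E D_{ii} = \frac{N}{2}(w_{\rm in}-w_{\rm out}) - o(1)$ as in \eqref{eq:diag_i}, and once from the decomposition \eqref{eq:Dii_as_Y} together with the asymptotics \eqref{eq:Rp} already established inside the proof of Lemma \ref{lem:lower_bound}, which gives $\E D_{ii} = (1+o(1))\,\xi \log N$ --- and then equates the two. That route recycles the bookkeeping of Section \ref{sec:lower_bound} and needs no new combinatorics. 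Your route is a self-contained direct computation: the count $\binom{N-2}{d-2}$, the ratio $\binom{N-2}{d-2}/\binom{N-1}{d-1}=(d-1)/(N-1)$, the hypergeometric-to-$\bin(d-2,1/2)$ approximation with $O(1/N)$ error (valid since $d_m$ is fixed and the $\alpha$'s are bounded), and the identity $\binom{d-2}{r}-\binom{d-2}{r-1}=\frac{d-1-2r}{d-1}\binom{d-1}{r}$ are all correct, and your proof makes transparent exactly where the weights $\binom{d-1}{r}(d-1-2r)$ in \eqref{eq:Assortativity} come from, at the price of redoing the counting that the paper borrows from its lower-bound analysis.

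The one step you deferred --- fixing the overall constant --- is precisely where the wrinkle sits, so you must actually finish it, and when you do, the constant is not $1$. Carrying your prefactors through,
\begin{equation*}
 w_{\rm in}^{(m)}-w_{\rm out}^{(m)}
 \weq \frac{(d-1)\log N}{N}\,2^{-(d-2)}\sum_{r=0}^{d-1}\frac{d-1-2r}{d-1}\binom{d-1}{r}\alpha^{(m)}_{(r,d-r)} + o\!\left(\frac{\log N}{N}\right)
 \weq \frac{2\log N}{N}\,\xi^{(m)} + o\!\left(\frac{\log N}{N}\right),
\end{equation*}
where $\xi^{(m)}$ is the $m$-th summand of \eqref{eq:Assortativity}; summing over $m$ yields $w_{\rm in}-w_{\rm out} = \frac{2\xi\log N}{N} + o(\log N/N)$, i.e.\ twice the stated claim. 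Your own suggested $d=2$ sanity check confirms this: there $w_{\rm in}-w_{\rm out} = \frac{\log N}{N-1}\bigl(\alpha_{(0,2)}-\alpha_{(1,1)}\bigr)$, while \eqref{eq:Assortativity} gives $\xi = \frac{1}{2}\bigl(\alpha_{(0,2)}-\alpha_{(1,1)}\bigr)$. This is not a defect of your method but an inconsistency in the paper: combining \eqref{eq:diag_i} with the display $\E D_{ii} = (1+o(1))\,\xi\log N$, the paper's own proof likewise produces $\frac{2\xi\log N}{N}$, and its closing sentence silently drops the factor $2$ (related normalization slips are visible in Table \ref{tab:Assortativity}, whose entries are $2^{d-1}$ times formula \eqref{eq:Assortativity}). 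So: complete the constant chase, state the conclusion as $w_{\rm in}-w_{\rm out} = \frac{2\xi\log N}{N}+o(\log N/N)$ (or absorb the $2$ into the definition of $\xi$), and your proof is correct and complete; the residual factor-$2$ mismatch with the Proposition as printed is the paper's erratum, and it is immaterial to the rest of the paper, where only the sign of $\xi$ and the order $\Theta(\log N/N)$ of the gap are used.
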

Therefore, a model is said to be assortative if $\xi > 0$, and disassortative if $\xi < 0$. 

\begin{table}
\caption{\label{tab:Assortativity} Assortativity $\xi$ in a $d$-uniform HSBM with $M$ layers, where we denote $\alpha_{\bt} = \sum_{m=1}^M \alpha_{\bm{t}}^{(m)}$.}
\centering
\small
\begin{tabular}{ll}
\toprule
$d$ \hspace{2ex} & $\xi$ \\
\midrule
2 & $\alpha_{(0, 2)} - \alpha_{(1,1)}$  \\
3 & $2 \alpha_{(0, 3)} - 2 \alpha_{(1, 2)}$ \\
4 & $3 \alpha_{(0, 4)} - 3 \alpha_{(2, 2)} $ \\
5 & $4 \alpha_{(0, 5)} + 4\alpha_{(1, 4)} - 8 \alpha_{(2,3)} $ \\
6 & $5\alpha_{(0, 6)} + 10\alpha_{(1, 5)} - 5\alpha_{(2, 4)} - 10\alpha_{(3, 3)}$ \\
\bottomrule
\end{tabular}
\end{table}

In order to state our main result concerning the performance of Algorithm~\ref{alg:sdp}, we will need the following information quantity%
\begin{equation}
 \label{eq:info_metric_layers}
I
 \weq \ \sup_{\lambda \in \R} \sum_{m=1}^M \sum_{r=0}^{d_m-1}
 2^{-(d_m-1)} \binom{d_m-1}{r} \alpha_{(r, d_m-r)}^{(m)}
 \left( 1-e^{-\lambda\left(d_m - 1 - 2r\right)} \right),
\end{equation}
which captures the propensity of a node to be present in a hyperedge containing more nodes of the same (different) community as itself in the assortative (disassortative) case.
To be concise, we omit the dependence on model parameters in the definition of both $\xi$ and $I$.

We now state the main theorem that characterizes Algorithm \ref{alg:sdp} and provides a sufficient condition for exact recovery on the aggregate similarity matrix of a symmetric multilayer HSBM using the SDP approach.

\begin{theorem}
\label{the:Main}
Suppose $(\bsigma, \bA) \sim \hsbm(N,M,(d_m),(\alpha_{\bt}^{(m)}))$, and let $\bW$ be the aggregate similarity matrix of $\bA$. When $I > 1$, Algorithm~\ref{alg:sdp} with $\bW$ and $s = \sgnxi$ as inputs achieves exact recovery as defined in \eqref{def:exact_recovery}.
\label{thm:main}
\end{theorem}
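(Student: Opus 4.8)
The plan is to establish that, when $I>1$ and $s=\sgnxi$, the rank-one matrix $\bX^\star=\bsigma\bsigma^{\mathrm T}$ is with high probability the \emph{unique} maximizer of the SDP~\eqref{eq:sdp}. Once this is shown, exact recovery is immediate: the only nonzero eigenvector of $\bsigma\bsigma^{\mathrm T}$ is proportional to $\bsigma$, so the optimal $\bX^\star$ has top eigenvector $\bm{v_1}=\pm\bsigma/\sqrt N$, and the rounding step of Algorithm~\ref{alg:sdp} returns $\operatorname{sgn}(\bm{v_1})=\pm\bsigma$. Thus the whole argument reduces to certifying optimality and uniqueness of the planted solution.

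To certify optimality I would pass to the dual of \eqref{eq:sdp}. Introducing a diagonal multiplier matrix $\bD$ for the constraints $X_{ii}=1$ and a scalar $\nu$ for the bisection constraint, the candidate dual slack matrix is
\[
 \bS \weq \bD + \nu\,\1\1^{\mathrm T} - s\bW .
\]
Standard SDP duality shows that $\bX^\star$ is optimal provided $\bS\succeq\0$ and $\bS\bsigma=\0$ (complementary slackness), and that it is the \emph{unique} optimum provided in addition $\lambda_2(\bS)>0$, i.e.\ $\ker\bS=\operatorname{span}(\bsigma)$. Since the communities are balanced we have $\1^{\mathrm T}\bsigma=0$, so $\bS\bsigma=\0$ forces the diagonal entries
\[
 D_{ii} \weq s\,\sigma_i(\bW\bsigma)_i \weq s\sum_{j}W_{ij}\sigma_i\sigma_j,
\]
which is exactly $s$ times the signed difference between the number of aggregated hyperedges joining node $i$ to its own community and to the opposite one. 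The scalar $\nu$ remains free, to be chosen of order $\log N$ so as to dominate the $\1$-direction.

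The heart of the proof is to show $\lambda_2(\bS)>0$ with high probability. I would split the ambient space. On $\operatorname{span}\{\bsigma,\1\}$ the matrix is controlled by construction: $\bS\bsigma=\0$, while $\nu$ is large enough that $\bS$ is strictly positive along $\1$. On the complement $\{\bsigma,\1\}^{\perp}$, writing $\bS=\bD-s\bW+\nu\1\1^{\mathrm T}$ and using that $\E\bW$ lies in $\operatorname{span}\{\bI,\1\1^{\mathrm T},\bsigma\bsigma^{\mathrm T}\}$ (so on this subspace it reduces to a negligible multiple of $\bI$ of order $\log N/N$), one obtains for unit $\bx\perp\bsigma,\1$,
\[
 \bx^{\mathrm T}\bS\bx \wge \min_i D_{ii} - \|\bW-\E\bW\| - o(\log N).
\]
By Proposition~\ref{prop:assortativity_W} the mean of each $D_{ii}$ is $\tfrac{|\xi|}{2}\log N+o(\log N)=\Theta(\log N)$ once $s=\sgnxi$, whereas spectral concentration in the $\Theta(\log N)$-degree regime yields $\|\bW-\E\bW\|=o(\log N)$. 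Hence positivity reduces to the event $\{\min_i D_{ii}>0\}$ holding simultaneously for all $N$ nodes.

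The main obstacle, and the step that produces the threshold $I>1$, is the large-deviation estimate for a single $D_{ii}$ combined with a union bound over nodes. A hyperedge of size $d_m$ in layer $m$ containing node $i$ with $r$ of its remaining $d_m-1$ vertices in the opposite community contributes $s(d_m-1-2r)$ to $D_{ii}$, and the number of such hyperedges is asymptotically Poisson with mean $2^{-(d_m-1)}\binom{d_m-1}{r}\alpha_{(r,d_m-r)}^{(m)}\log N$. Writing $D_{ii}$ as this signed sum of independent approximately-Poisson counts and applying a Chernoff bound, the exponential decay rate of $\P(s\,D_{ii}\le0)$ is precisely
\[
 \sup_{\lambda\in\R}\sum_{m=1}^M\sum_{r=0}^{d_m-1} 2^{-(d_m-1)}\binom{d_m-1}{r}\alpha_{(r,d_m-r)}^{(m)}\bigl(1-e^{-\lambda(d_m-1-2r)}\bigr)\log N \weq I\log N,
\]
matching \eqref{eq:info_metric_layers}, with the $\sup_\lambda$ arising from optimizing the Chernoff tilt. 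Thus $\P(D_{ii}\le0)=N^{-I+o(1)}$, and the union bound gives $\P(\min_i D_{ii}\le0)\le N\cdot N^{-I+o(1)}\to0$ exactly when $I>1$. The delicate points I expect to require the most care are: making the Poisson approximation and the moment-generating-function bookkeeping across sizes $d_m$, profiles $r$, and layers $m$ rigorous; controlling the dependence between the entries $W_{ij}$ that share a common hyperedge; and proving $\|\bW-\E\bW\|=o(\log N)$ with enough uniformity to be combined safely with the per-node union bound.
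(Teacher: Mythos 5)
Your proposal follows essentially the same route as the paper: the Gaudio--Joshi dual certificate $\bS = \bD + \nu\1\1^{\mathrm T} - s\bW$ with $D_{ii} = s\sum_j W_{ij}\sigma_i\sigma_j$, a rank-two reduction of $\E\bW$ to lower-bound the quadratic form on the orthogonal complement of $\bsigma$, spectral concentration of $\bW-\E\bW$, and a Chernoff/union-bound argument whose optimal exponent is exactly the quantity $I$ from \eqref{eq:info_metric_layers}. However, there is one step that fails as written: the claim that ``positivity reduces to the event $\{\min_i D_{ii}>0\}$,'' followed by a large-deviation estimate for $\P(D_{ii}\le 0)$. Your own lower bound is $\bx^{\mathrm T}\bS\bx \ge \min_i D_{ii} - \|\bW-\E\bW\|_2 - o(\log N)$, and the concentration available in this regime (the paper's Lemma~\ref{lem:lkc20}, from Lee--Kim--Chung) gives $\|\bW-\E\bW\|_2 = O(\sqrt{\log N})$, not $0$. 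So knowing only $\min_i D_{ii}>0$ (say, $D_{ii}=1$ for some node) does not make the right-hand side positive; you need $\min_i D_{ii}$ to dominate the $\sqrt{\log N}$ spectral error. The paper's Lemma~\ref{lem:lower_bound} proves precisely the stronger statement $\P(D_{ii}\le \epsilon\log N) \le N^{\lambda^*\epsilon - (1+o(1))I} = o(N^{-1})$ for sufficiently small $\epsilon>0$: moving the threshold from $0$ to $\epsilon\log N$ costs $\lambda^*\epsilon$ in the Chernoff exponent, which is absorbed because $I>1$ is a strict inequality. Your machinery yields this with a one-line modification, but the reduction to bare positivity is a genuine logical gap, not just a matter of rigor.

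Two smaller points. First, your split of the space into $\operatorname{span}\{\1\}$ and $\{\bsigma,\1\}^{\perp}$ leaves uncontrolled cross terms $\bu^{\mathrm T}\bS\bv$ between the two subspaces; the paper avoids this by bounding $\bx^{\mathrm T}\bS\bx$ directly for all $\bx\perp\bsigma$, keeping the term $\bigl(1-\tfrac{w_{\rm in}+w_{\rm out}}{2}\bigr)(\1^{\mathrm T}\bx)^2\ge 0$ in the assortative case (with $\nu=1$) and simply taking $\nu=0$ in the disassortative case so that the $\1\1^{\mathrm T}$ term disappears. Second, the Poisson approximation you invoke is unnecessary: the counts $Y^{(m)}_{i,\bt}$ are exactly binomial, and the paper bounds their moment-generating functions directly via $1-x\le e^{-x}$ applied to $\bigl[1-p\bigl(1-e^{-s\lambda(t_k-t_{-k})}\bigr)\bigr]^{R}$, which sidesteps the approximation bookkeeping you flag as delicate.
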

The proof of Theorem~\ref{the:Main} is provided in Section \ref{sec:analysis}. Taking $M=1$ with parameters
\begin{equation*}
\alpha_{(r,d-r)}
\weq
\begin{cases}
 \alpha & \text{ for } r=0 \text{ and } r=d \\
 \beta & \text{ for } 1 \le r \le d-1
\end{cases}
\end{equation*}
reduces to a homogeneous model that has been studied earlier in the
assortative case with $\xi = (d-1)(\alpha - \beta) > 0$.  In this setting Kim, Bandeira, and Goemans \cite{Kim_Bandeira_Goemans_2018-07-08} showed that the SDP algorithm does not achieve exact recovery when $I < 1$ and   Gaudio and Joshi \cite{Gaudio_Joshi_2022-08-23} proved that the SDP algorithm achieves exact recovery when $I > 1$, as conjectured in \cite{Kim_Bandeira_Goemans_2018-07-08}.

\section{Numerical illustrations}
\label{sec:numerical}
We perform numerical simulations to demonstrate the effect of the number of observed hypergraph layers on the classification accuracy of Algorithm \ref{alg:sdp} \footnote{Source code: \url{https://github.com/kalaluusua/Hypergraph-clustering.git}}. The synthetic data is sampled from a $4$-uniform $\hsbm(50,M,4,(\alpha_{\bt}^{(m)}))$, with $1\le M \le 3$. We let the hypergraph layers be identically distributed giving $\alpha_{\bm{t}}^{(m)} =: \alpha_{\bt}$ for all $m\in [M]$. We examine four scenarios: homogeneous and assortative, homogeneous and disassortative, inhomogeneous and assortative, as well as inhomogeneous and disassortative. Table \ref{tab:probs} provides the parameter values used in each case, respectively. These values are chosen such that the expected degree, i.e. the number of hyperedges a node is incident to, is the same in both the homogeneous and the inhomogeneous cases. The associated hyperedge probabilities are computed from \eqref{eq:regime}.

\begin{table}[!htb]
\caption{\label{tab:probs} The columns with numerical entries represent the parameter values ($\alpha_{\bt}$) used in each of the four simulated scenarios.}
\centering
\begin{tabular}{crrcrr}
\toprule
& \multicolumn{2}{c}{Homogeneous} & & \multicolumn{2}{c}{Inhomogeneous} \\
\cmidrule(lr){2-3} \cmidrule(lr){5-6}
 & Assortat. & Disassort. & & Assortat.\ & Disassort. \\
\midrule
$\alpha_{(4,0)}$ & $18.8$ & $7.3$ & \hphantom{X} & $18.8$  & $4.7$ \\
$\alpha_{(3,1)}$ & $7.3$ & $18.8$ & & $9.4$ & $9.4$  \\
$\alpha_{(2,2)}$ & $7.3$ & $18.8$ & & $4.7$ & $18.8$  \\
\bottomrule
\end{tabular}
\end{table}

To evaluate the accuracy of our estimate given $\bm \sigma$, we use the Hubert-Arabie adjusted Rand index (AR) \cite{Gosgens_Tikhonov_Prokhorenkova_2021,hubert_arabie_1985}, which is a measure of similarity between two community assignments. The index is equal to $1$ when the assignments are identical, and $0$ when they are independent. For each simulated hypergraph, we also compute the classification error (CE), which we define as the fraction of misclassified nodes
$
N^{-1}\min\{ \Ham(\bm{\hat{\sigma}},\bm \sigma), \Ham(\bm{\hat{\sigma}}, -\bm \sigma)\},
$
where $\Ham$ denotes the Hamming distance. The results (averaged over five different random seed initializations) are depicted in Table \ref{tab:results}.

\begin{table}[!htb]
\setlength{\tabcolsep}{2.9pt}
\centering
\caption{Classification error (CE) and Adjusted Rand index (AR) of the community assignment estimate.}
\begin{tabular} {ccrrrrrrcrrrrrr} %
\toprule
& & \multicolumn{6}{c}{Homogeneous} & & \multicolumn{6}{c}{Inhomogeneous} \\
\cmidrule(lr){3-8} \cmidrule(lr){10-15}
& & & &\multicolumn{2}{c}{Assortat.} & \multicolumn{2}{c}{Disassort.} & & & & \multicolumn{2}{c}{Assortat.} & \multicolumn{2}{c}{Disassort.} \\
\cmidrule(lr){5-6}\cmidrule(lr){7-8}\cmidrule(lr){12-13}\cmidrule(lr){14-15}
$M$ & & $|\xi|$ & $I$ & CE  & AR  &  CE & AR & \hphantom{X} & $|\xi|$ & $I$ & CE  & AR  & CE & AR \\
\midrule
1  & & $34.5$ & $0.58$ & $0.160$  & $0.464$ & $0.184$ &$0.475$ & & 42.4 & $0.41$ & $0.052$ & $0.799$ & $0.012$ & $0.952$ \\
2  & & $68.9$ & $1.08$ & $0.024$ & $0.906$ & $0.052$ & $0.800$ & & 84.8 & $0.83$ & $0.008$ & $0.969$ & $0.000$ & $1.000$\\
3  & & $103.4$ & $1.62$ & $0.004$ & $0.984$ & $0.012$ & $0.953$ & & 127.2 & $1.24$ & $0.000$ & $1.000$  & $0.000$ & $1.000$\\
\bottomrule
\end{tabular}
\label{tab:results}
\end{table}

Based on the $I$-values, we expect that the community detection performance improves as $M$ increases and is the same for the assortative and disassortative cases. This is precisely what Table \ref{tab:results} shows. Moreover, the larger $I$-values of the homogeneous case lead us to expect an overall better performance in comparison to the inhomogeneous case. Surprisingly, this is not the case. An inspection of the $\xi$-values reveals a larger level of (dis)assortativity in the inhomogeneous case. In small to moderate hypergraph sizes, we suspect that the level of assortativity may predict the detection performance of Algorithm \ref{alg:sdp} better than the information-theoretic quantity $I$, whose effect is more profound in the asymptotic regime.

\section{Analysis of the algorithm}
\label{sec:analysis}
In this section, we provide a detailed proof of Theorem \ref{thm:main}.
We follow the procedure of \cite{Gaudio_Joshi_2022-08-23,hajek2016achieving,Kim_Bandeira_Goemans_2018-07-08} to analyze the SDP framework, and extend it to a more general model $\hsbm(N,M,(d_m),(\alpha_{\bt}^{(m)}))$ that addresses multiple layers, disassortativity, and (symmetric) inhomogeneity. 

An outline of this section is as follows. Section \ref{sec:sdpanalysis} constructs a dual certificate strategy to solve the SDP in \eqref{eq:sdp} and specializes it to the assortative and disassortative cases. Bounds on certain quantities that arise as part of this strategy are provided in Sections \ref{sec:upper_bound} and \ref{sec:lower_bound}. In Section \ref{sec:assortativity}, we comment on the assortative/disassortative nature of the model and its manifestation in our analysis. Finally, Section \ref{sec:proof} puts the parts together to complete the proof of Theorem \ref{thm:main}. 

\subsection{SDP analysis}
\label{sec:sdpanalysis}

To begin, we state a sufficient condition for optimality of Algorithm \ref{alg:sdp}. 
This is a corollary of \cite[Lemma 2.2]{Gaudio_Joshi_2022-08-23} that asserts strong duality for \eqref{eq:sdp} with $s = 1$.
\begin{lemma}
\label{lem:dual}
Fix $s \in \{\pm 1\}$. Suppose there is a diagonal matrix $\bD \in \R^{N\times N}$ and $\nu \in \R$ such that the matrix $\bS \coloneqq \bD + \nu \bm{1} \bm{1}^T - s\bW$ is positive semidefinite, its second smallest eigenvalue $\lambda_{N-1}(\bS)$ is strictly positive, and $\bS \bsigma = 0$, then $\bX^* = \bsigma \bsigma^T$ is the unique optimal solution to \eqref{eq:sdp} (with the same $s$). 
\end{lemma}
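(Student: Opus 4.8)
The plan is to establish Lemma~\ref{lem:dual} as a standard dual-certificate (complementary-slackness) argument for the SDP in \eqref{eq:sdp}. The hypotheses name a candidate dual solution, namely the diagonal matrix $\bD$ and the scalar $\nu$ assembling into $\bS = \bD + \nu \bm{1}\bm{1}^T - s\bW$, and the conclusion is that the rank-one matrix $\bX^* = \bsigma\bsigma^T$ is the \emph{unique} primal optimizer. First I would verify primal feasibility of $\bX^* = \bsigma\bsigma^T$: its diagonal entries are $\sigma_i^2 = 1$, it is positive semidefinite by construction (it is an outer product), and since $\bsigma$ is balanced ($\1^T\bsigma = 0$) we have $\sum_{i,j} X^*_{ij} = (\1^T\bsigma)^2 = 0$, so the affine constraints hold. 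Thus $\bX^*$ is feasible for the program with the given $s$.

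The heart of the argument is weak duality plus a matching-objective computation. I would interpret $\bD$ (one scalar per diagonal constraint $X_{ii}=1$) and $\nu$ (the multiplier for the single constraint $\sum_{i,j}X_{ij}=0$) as dual variables, so that $\bS \succeq 0$ is exactly the dual feasibility/slackness condition: for any primal-feasible $\bX$,
\begin{align*}
 \langle s\bW, \bX \rangle
 \weq \langle \bD + \nu\bm{1}\bm{1}^T - \bS, \bX\rangle
 \weq \operatorname{tr}(\bD) + \nu \textstyle\sum_{i,j} X_{ij} - \langle \bS, \bX\rangle
 \wle \operatorname{tr}(\bD),
\end{align*}
using $X_{ii}=1$, the trace constraint $\sum_{i,j}X_{ij}=0$, and $\langle \bS,\bX\rangle \ge 0$ (valid since both $\bS \succeq 0$ and $\bX \succeq 0$). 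This shows $\operatorname{tr}(\bD)$ upper-bounds the primal objective. Next I would check that $\bX^*$ attains this bound: the condition $\bS\bsigma = 0$ gives $\langle \bS, \bsigma\bsigma^T\rangle = \bsigma^T \bS \bsigma = 0$, so the chain above holds with equality for $\bX^*$. Hence $\bX^*$ is optimal.

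For \emph{uniqueness} I would exploit the strict positivity of $\lambda_{N-1}(\bS)$, i.e. that $\bS$ has a one-dimensional kernel spanned precisely by $\bsigma$. Suppose $\tilde{\bX}$ is any other optimizer. Optimality forces equality throughout the weak-duality chain, in particular $\langle \bS, \tilde{\bX}\rangle = 0$; since $\bS \succeq 0$ and $\tilde{\bX} \succeq 0$, writing $\bS = \sum_i \mu_i \bu_i\bu_i^T$ with $\mu_i > 0$ for $i \le N-1$ and $\mu_N = 0$, the vanishing of $\operatorname{tr}(\bS\tilde{\bX}) = \sum_i \mu_i \bu_i^T \tilde{\bX} \bu_i$ forces $\bu_i^T\tilde{\bX}\bu_i = 0$ for every $i$ with $\mu_i>0$, and hence (again by $\tilde{\bX}\succeq 0$) $\tilde{\bX}\bu_i = 0$ for all such $i$. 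Thus the column space of $\tilde{\bX}$ lies in $\ker\bS = \operatorname{span}(\bsigma)$, so $\tilde{\bX} = c\,\bsigma\bsigma^T$ for some $c \ge 0$; the constraint $\tilde{X}_{ii}=1$ then pins down $c=1$, giving $\tilde{\bX} = \bsigma\bsigma^T = \bX^*$.

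\textbf{Main obstacle.} The conceptual steps are routine SDP duality; the part demanding care is the uniqueness argument, specifically the passage from $\operatorname{tr}(\bS\tilde{\bX})=0$ with two PSD matrices to the containment $\operatorname{col}(\tilde{\bX}) \subseteq \ker(\bS)$, and confirming that the kernel is \emph{exactly} one-dimensional so that no freedom beyond scaling survives. Since the lemma is stated as a corollary of \cite[Lemma~2.2]{Gaudio_Joshi_2022-08-23}, I would alternatively—and more briefly—reduce the $s=-1$ case to the already-established $s=+1$ case by substituting $\bW \mapsto -\bW$ (equivalently, noting $s\bW$ is the only place $s$ enters), so that the cited result applies verbatim with $\bS$ as defined; the only thing to check is that the substitution leaves the feasible region and the certificate conditions unchanged, which it does since neither the constraints nor the PSD/kernel requirements on $\bS$ reference $s$ except through the combination $s\bW$ already absorbed into $\bS$.
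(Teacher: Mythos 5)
Your proposal is correct and complete: primal feasibility of $\bsigma\bsigma^T$, the weak-duality chain $\langle s\bW,\bX\rangle = \operatorname{tr}(\bD) - \langle \bS,\bX\rangle \le \operatorname{tr}(\bD)$ for every feasible $\bX$, equality at $\bX^*$ via $\bS\bsigma=0$, and the uniqueness step (complementary slackness $\langle \bS,\tilde{\bX}\rangle=0$ forcing, through the spectral decomposition of $\bS$ and positive semidefiniteness of $\tilde{\bX}$, that $\operatorname{col}(\tilde{\bX})\subseteq\ker\bS=\operatorname{span}(\bsigma)$, after which $\tilde{X}_{ii}=1$ fixes the scale) are all sound. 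The paper, however, writes none of this out: its entire proof is the remark that the statement is a corollary of Lemma~2.2 of \cite{Gaudio_Joshi_2022-08-23}, which asserts exactly this dual-certificate fact for $s=+1$, with the $s=-1$ case handled by absorbing the sign into $\bW$ --- i.e., the short reduction you sketch at the end of your proposal \emph{is} the paper's actual argument. So your main argument takes a genuinely different, more self-contained route: it re-proves the cited lemma from scratch, which buys independence from the reference and makes explicit that nothing in the duality argument uses any structural property of $\bW$ beyond symmetry (the one point that must be checked before applying the cited $s=+1$ result verbatim to $-\bW$, since in that reference $\bW$ is an entrywise-nonnegative similarity matrix); the paper's citation route buys brevity. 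One cosmetic point in your favor: the balance constraint in \eqref{eq:sdp} is typeset as a sum over $i\le j$, which you correctly read in its intended form $\sum_{i,j} X_{ij}=0$, i.e.\ $\langle \bm{1}\bm{1}^T,\bX\rangle = 0$; under the literal reading $\bsigma\bsigma^T$ would not even be feasible, so your interpretation is the right one and your feasibility check goes through.
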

For the $\hsbm(N,M,(d_m),(\alpha_{\bt}^{(m)}))$ model with node communities $\bsigma$ and the aggregate similarity matrix $\bW$, define
\begin{equation}
 \label{eq:DefD}
 D_{ii}
 \wcoloneqq s \sum_{j} W_{ij} \sigma_i \sigma_j,
\end{equation}
where $s = \sgnxi$. With $\bD = \text{diag}(D_{ii})$, it is easy to verify that $\bS \bsigma = 0$ and, therefore, it suffices to show that 
\begin{align}
 \P\left( \inf_{\bx \bot \bsigma: \| \bx \|_2=1} \bx^T \bS \bx >0 \right)
 \weq 1-o(1)
 \label{eq:pos-semidef}
\end{align}
for Lemma \ref{lem:dual} to hold. Note that $\bS \bsigma = 0$ and \eqref{eq:pos-semidef} together ensure that the kernel of $\bS$ is the line spanned by $\bsigma$ and $\lambda_{1}(\bS) \geq \dots \geq \lambda_{N-1}(\bS) > 0$ with high probability. This in particular implies that $\bm S$ is positive semidefinite and its second smallest eigenvalue $\lambda_{N-1}(\bS)$ is strictly positive, with high probability. Using a similar methodology as in \cite[Theorem 2]{hajek2016achieving}, we obtain the following complementary lemmas for the assortative and disassortative cases, respectively.

\begin{lemma}
Let $\xi >0$. With $\bD$ defined via \eqref{eq:DefD} with $ s=\sgnxi = +1$ and $\bS \coloneqq \bD + \bm{1} \bm{1}^T - \bW$, for all $\bx \bot \bsigma$ such that $\| \bx \|_2 = 1$, we have 
\[
 \bx^T \bS \bx
 \ \geq \ \min_{i} D_{ii} - \| \bW-\E \bW \|_2,
\]
where $\E \bW$ is the expected aggregate similarity matrix conditioned on $\bsigma$.
\label{lem:Sispsd}
\end{lemma}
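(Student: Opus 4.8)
The plan is to exploit the two–community block structure of the conditional mean matrix $\E\bW$, isolating the diagonal contribution $\min_i D_{ii}$ and the spectral fluctuation $\|\bW-\E\bW\|_2$. First I would expand the quadratic form using $\bS = \bD + \bm{1}\bm{1}^T - \bW$, writing
\[
 \bx^T\bS\bx \weq \bx^T\bD\bx + (\bm{1}^T\bx)^2 - \bx^T\bW\bx,
\]
and split $\bW = \E\bW + (\bW - \E\bW)$. The diagonal term is handled immediately: since $\|\bx\|_2 = 1$ and each $x_i^2 \ge 0$, one has $\bx^T\bD\bx = \sum_i D_{ii}x_i^2 \ge (\min_i D_{ii})\sum_i x_i^2 = \min_i D_{ii}$, valid regardless of the sign of $\min_i D_{ii}$. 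The fluctuation term is controlled by the variational characterisation of the operator norm, $|\bx^T(\bW - \E\bW)\bx| \le \|\bW - \E\bW\|_2\,\|\bx\|_2^2 = \|\bW - \E\bW\|_2$. It then remains to show that the mean contribution is favourable, i.e.\ that $(\bm{1}^T\bx)^2 - \bx^T\E\bW\bx \ge 0$ for all $\bx \perp \bsigma$ with $\|\bx\|_2 = 1$.

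The key step is to make the block structure of $\E\bW$ explicit. Because the model is symmetric and the two communities are balanced, the conditional mean $\E[W_{ij}\mid\bsigma]$ for $i\neq j$ takes only the two values $w_{\rm in}$ (when $\sigma_i=\sigma_j$) and $w_{\rm out}$ (when $\sigma_i\neq\sigma_j$), while the diagonal vanishes. This lets me write
\[
 \E\bW \weq \tfrac{w_{\rm in}+w_{\rm out}}{2}\bigl(\bm{1}\bm{1}^T - \bI\bigr) + \tfrac{w_{\rm in}-w_{\rm out}}{2}\bigl(\bsigma\bsigma^T - \bI\bigr).
\]
Substituting this form and using $\bsigma^T\bx = 0$, so that the $\bsigma\bsigma^T$ contribution is annihilated, together with $\|\bx\|_2 = 1$, yields after simplification
\[
 (\bm{1}^T\bx)^2 - \bx^T\E\bW\bx \weq \Bigl(1 - \tfrac{w_{\rm in}+w_{\rm out}}{2}\Bigr)(\bm{1}^T\bx)^2 + w_{\rm in}.
\]
Both summands are nonnegative: $w_{\rm in}\ge 0$ as the expectation of a nonnegative count, and $1 - (w_{\rm in}+w_{\rm out})/2 > 0$ because the scaling \eqref{eq:regime} forces $w_{\rm in}, w_{\rm out} = \Theta(\log N / N) \to 0$, so the coefficient is positive for all large $N$. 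Hence the mean contribution is nonnegative and may be dropped from the lower bound, giving
\[
 \bx^T\bS\bx \wge \bx^T\bD\bx - \bx^T(\bW - \E\bW)\bx \wge \min_i D_{ii} - \|\bW - \E\bW\|_2,
\]
as claimed.

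I expect the only delicate point to be the derivation of the block decomposition of $\E\bW$: one must verify that the symmetric assumption $\alpha_{(r,d_m-r)}^{(m)} = \alpha_{(d_m-r,r)}^{(m)}$ together with the exact balance of the two communities make $\E[W_{ij}\mid\bsigma]$ genuinely a function of $\mathbf{1}\{\sigma_i=\sigma_j\}$ alone. Indeed, the residual community counts seen by a pair $(i,j)$ differ between the equal and unequal cases, and it is precisely the symmetry that equalises the two same–community sub-cases (both communities being the positive or both the negative label). Everything else reduces to the operator-norm bound and the nonnegativity of $x_i^2$. I would also remark that the sign of $w_{\rm in}-w_{\rm out}$, and hence the assortativity, plays no role in this particular lemma, since that term is killed on $\bsigma^{\perp}$; the assortativity enters only through the choice $s=+1$ and later through the analysis of $\min_i D_{ii}$.
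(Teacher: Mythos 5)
Your proposal is correct and follows essentially the same route as the paper: expand $\bx^T\bS\bx$, apply the rank-two block decomposition of $\E\bW$ (yours is algebraically identical to the paper's $\tfrac{w_{\rm in}+w_{\rm out}}{2}\bm{1}\bm{1}^T+\tfrac{w_{\rm in}-w_{\rm out}}{2}\bsigma\bsigma^T-w_{\rm in}\bI$), kill the $\bsigma\bsigma^T$ term via $\bx\perp\bsigma$, bound the fluctuation by the operator norm, and drop the nonnegative mean contribution using $w_{\rm in},w_{\rm out}=\Theta(\log N/N)$. The only cosmetic difference is that you isolate $(\bm{1}^T\bx)^2-\bx^T\E\bW\bx$ as a single nonnegative quantity, whereas the paper absorbs the terms inequality by inequality.
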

\begin{proof}
The expected similarity matrix for a symmetric HSBM admits the following rank-$2$ decomposition:
\begin{equation*}
 \E \bW
 = \left( \frac{w_{\rm in}+w_{\rm out}}{2}\right) \bm{1}\bm{1}^T+\left( \frac{w_{\rm in}-w_{\rm out}}{2}\right) \bsigma\bsigma^T-w_{\rm in} \bI,
\end{equation*}
where $w_{\rm in} = \E[W_{ij}|\sigma_i = \sigma_j]$, $w_{\rm out} = \E[W_{ij}|\sigma_i \neq \sigma_j]$ and $\bI$ is the $N \times N$ identity matrix. 
We can then write
\begin{align*}
 \bx^T \bS \bx
 &\weq \bx^T \bD \bx + \left(\bm{1}^T \bx\right)^2 - \bx^T(\bW - \E \bW)\bx - \bx^T \E \bW \bx \\
 &\weq \bx^T \bD \bx + \left(\bm{1}^T \bx\right)^2 - \bx^T(\bW - \E \bW)\bx\\
 &\qquad - \left(\frac{w_{\rm in}+w_{\rm out}}{2}\right) \left(\bm{1}^T\bx \right)^2
 - \left( \frac{w_{\rm in}-w_{\rm out}}{2}\right) \left(\bsigma^T\bx\right)^2 + w_{\rm in} ||\bx||_2^2.
\end{align*}
Because of the definition of the spectral norm and the facts that $\bx \bot \bsigma$, and $w_{\rm in},w_{\rm out} = \Theta(\frac{\log N}{N})$ as shown in the proof of Proposition \ref{prop:assortativity_W} (Section \ref{sec:assortativity}), we obtain
\begin{align*}
 \bx^T S \bx
 &\wge \left(\min_{i} D_{ii}\right) \| \bx \|_2^2 + \left(\bm{1}^T \bx\right)^2 \left(1-\frac{w_{\rm in}+w_{\rm out}}{2}\right)
   - \bx^T(\bW-\E \bW)\bx \\
 &\wge \min_{i} D_{ii} - \|\bW-\E \bW\|_2,
\end{align*}
which proves the lemma.
\end{proof}

\begin{lemma}
Let $\xi <0$. With $\bD$ defined via \eqref{eq:DefD} with $ s=\sgnxi = -1$ and  $\bS \coloneqq \bD + \bW$, for all $\bx \bot \bsigma$ such that $\| \bx \|_2 = 1$, we have 
\[
 \bx^T \bS \bx
 \ \geq \ \min_{i} D_{ii} - \| \bW-\E \bW \|_2 - w_{\rm in}.
\]
\label{lem:Sispsd2}
\end{lemma}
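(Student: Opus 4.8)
The plan is to mirror the argument for Lemma~\ref{lem:Sispsd} almost verbatim, while carefully tracking the sign changes induced by $s=-1$ and by the fact that here $\bS = \bD + \bW$ carries no $\nu\bm{1}\bm{1}^T$ term. First I would reuse the rank-$2$ decomposition of $\E\bW$ established in the proof of Lemma~\ref{lem:Sispsd}, namely
\[
 \E\bW \weq \left(\frac{w_{\rm in}+w_{\rm out}}{2}\right)\bm{1}\bm{1}^T + \left(\frac{w_{\rm in}-w_{\rm out}}{2}\right)\bsigma\bsigma^T - w_{\rm in}\bI,
\]
which holds for any symmetric HSBM irrespective of its assortativity, so it applies unchanged in the disassortative regime.

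Next I would split $\bx^T \bS \bx = \bx^T\bD\bx + \bx^T(\bW - \E\bW)\bx + \bx^T \E\bW\, \bx$ and evaluate the last summand via the decomposition. Since $\bx\bot\bsigma$ annihilates the $\bsigma\bsigma^T$ contribution and $\|\bx\|_2=1$, this reduces to $\bx^T\E\bW\,\bx = \left(\frac{w_{\rm in}+w_{\rm out}}{2}\right)(\bm{1}^T\bx)^2 - w_{\rm in}$. I would then lower-bound each piece: $\bx^T\bD\bx \wge \min_i D_{ii}$ because $\bD$ is diagonal and $\|\bx\|_2=1$; $\bx^T(\bW-\E\bW)\bx \wge -\|\bW-\E\bW\|_2$ by definition of the spectral norm; and crucially, since the coefficient $\frac{w_{\rm in}+w_{\rm out}}{2}$ is nonnegative (both $w_{\rm in},w_{\rm out}\ge 0$ being expectations of nonnegative hyperedge counts), the term $\left(\frac{w_{\rm in}+w_{\rm out}}{2}\right)(\bm{1}^T\bx)^2$ can simply be discarded. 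Collecting these bounds gives $\bx^T\bS\bx \wge \min_i D_{ii} - \|\bW-\E\bW\|_2 - w_{\rm in}$, which is exactly the claim.

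There is no genuine obstacle here; the computation is routine once Lemma~\ref{lem:Sispsd} is in hand. The only point demanding attention is the bookkeeping of signs: with $s=-1$ the matrix $\bS$ carries $+\bW$ rather than $-\bW$, so the $-w_{\rm in}\bI$ summand of $\E\bW$ now enters $\bx^T\E\bW\,\bx$ with a $-$ sign and produces the additional $-w_{\rm in}$ that distinguishes this bound from its assortative counterpart. Correspondingly, whereas the assortative case had to pair the $\bm{1}\bm{1}^T$ contribution with the added $\nu\bm{1}\bm{1}^T=\bm{1}\bm{1}^T$ and invoke $1-\tfrac{w_{\rm in}+w_{\rm out}}{2}\ge 0$ for large $N$, here the analogous term is already nonnegative and may be dropped for free. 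The estimates $w_{\rm in},w_{\rm out}=\Theta(\log N/N)$ continue to follow from Proposition~\ref{prop:assortativity_W}, exactly as invoked in the previous lemma.
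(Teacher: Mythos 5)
Your proof is correct and is essentially the paper's own argument: the paper proves this lemma in one line by applying the techniques of Lemma~\ref{lem:Sispsd} to the identical decomposition $\bx^T\bS\bx = \bx^T\bD\bx - \bx^T(\E\bW-\bW)\bx + \bx^T\E\bW\bx$, and your write-up simply carries out those steps explicitly, including the correct observation that the $\bm{1}\bm{1}^T$ term is now dropped as nonnegative rather than paired with $\nu\bm{1}\bm{1}^T$, and that the $-w_{\rm in}\bI$ term produces the extra $-w_{\rm in}$.
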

\begin{proof}
The claim follows from applying the techniques from the proof of Lemma~\ref{lem:Sispsd} on
\begin{align*}
 \bx^T \bS\bx &\weq \bx^T \bD \bx - \bx^T(\E\bW - \bW)\bx + \bx^T \E \bW \bx.%
\end{align*}
\end{proof}

\subsection{Upper bound on \texorpdfstring{$\|\bW-\E \bW\|_2$}{the distance from the expected similarity matrix}}\label{sec:upper_bound}

\noindent Let $\cE$ be the set of all node sets (hyperedges) $e \subset [N]$ having size $d$. We denote by $([N], (f_e)_{e\in \cE})$ a weighted $d$-uniform hypergraph with edge weights $(f_e)$, whose similarity matrix is a zero-diagonal matrix with off-diagonal entries $(i,j)$ given by $\sum_{e: e \ni i,j} f_e$.

\begin{lemma}[Theorem 4, \cite{lee_kim_chung_2020}]
\label{lem:lkc20}
Let $G = ([N], (f_e)_{e\in \cE})$, where a random weight $f_e\in [0,1]$ is independently assigned to each hyperedge $e\in \cE$. Denote by $\bW_{f}$ the similarity matrix of $G$. Assume that $\max_{e \in \mathcal{E}} \E [f_e] \leq \frac{c_0 \log N}{\binom{N-1}{d-1}}$. Then there exists a constant $C = C(d, c_0)>0$ such that
\begin{align*}
 \P\left(\| \bW_{f} - \E \bW_{f} \|_2
 \le C\sqrt{\log N} \right)
 &\wge 1 - O(N^{-11}).
\end{align*}
\end{lemma}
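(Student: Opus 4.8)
The plan is to realize $\bW_f - \E\bW_f$ as a sum of independent, mean-zero random matrices indexed by hyperedges, and then to apply a sharp matrix-concentration bound. Writing $\bA_e$ for the $N\times N$ zero-diagonal adjacency matrix of the clique on the $d$ vertices of $e$, so that $(\bA_e)_{ij} = \1[\{i,j\}\subset e]$ for $i\ne j$, the similarity matrix is $\bW_f = \sum_{e\in\cE} f_e\,\bA_e$ and therefore
\[
 \bm{\Delta} \ \coloneqq \ \bW_f - \E\bW_f \weq \sum_{e\in\cE}(f_e - \E f_e)\,\bA_e,
\]
a sum of independent symmetric matrices $\bX_e \coloneqq (f_e - \E f_e)\bA_e$ with $\|\bX_e\|_2 \le \|\bA_e\|_2 = d-1$ almost surely (the clique $K_d$ has top eigenvalue $d-1$).

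The second step is to compute the matrix variance proxy $\sigma^2 \coloneqq \bigl\|\sum_{e}\E[\bX_e^2]\bigr\|_2$. On the block indexed by $e$ one has $\bA_e^2 = (d-2)\bJ_e + \bI_e$, and $\operatorname{Var}(f_e)\le\E f_e \le c_0\log N/\binom{N-1}{d-1}$ because $f_e\in[0,1]$. Combining this with the incidence counts $\sum_{e\ni i}1 = \binom{N-1}{d-1}$ and $\sum_{e\ni i,j}1=\binom{N-2}{d-2}=\tfrac{d-1}{N-1}\binom{N-1}{d-1}$, both the diagonal ($\bI_e$) and off-diagonal ($\bJ_e$) contributions are $O(\log N)$, so that $\sigma^2 = O(\log N)$ with a constant depending only on $d$ and $c_0$.

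The crux — and the main obstacle — is that matrix Bernstein, $\P(\|\bm\Delta\|_2\ge t)\le 2N\exp\!\bigl(-\tfrac{t^2/2}{\sigma^2+(d-1)t/3}\bigr)$, is too blunt at the target scale: for $t = C\sqrt{\log N}$ the exponent stabilizes to a constant (since $\sigma^2\asymp\log N$), so the dimensional prefactor $N$ leaves the bound trivial, and Bernstein in fact only certifies $\|\bm\Delta\|_2 = O(\log N)$. To remove this spurious $\sqrt{\log N}$ factor I would switch to the trace (moment) method: bound $\E\operatorname{tr}(\bm\Delta^{2k})$ for an even power with $k\asymp\log N$ by expanding the trace over closed walks $i_0\to i_1\to\cdots\to i_{2k}=i_0$ whose steps carry the centered weights $f_e-\E f_e$. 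Independence and centering force every hyperedge used by a walk to be traversed at least twice for a nonzero expectation, after which the sparsity bound $\E f_e = O(\log N/\binom{N-1}{d-1})$ penalizes each distinct hyperedge; a count of the walks organized by their numbers of distinct vertices and hyperedges then yields $\E\operatorname{tr}(\bm\Delta^{2k})\le N\,(C'\sqrt{\log N})^{2k}$, and Markov's inequality with $t=C\sqrt{\log N}$, $C>C'$, and $k=\Theta(\log N)$ gives the stated $1-O(N^{-11})$ tail. The genuinely delicate point is the walk bookkeeping: since a single weight $f_e$ is shared among all $\binom{d}{2}$ pairs inside $e$, the walks live on an auxiliary multigraph in which one hyperedge supplies many edges, so the classical Füredi–Komlós encoding must be adapted to track hyperedge (rather than pair) multiplicities and to verify that sparsity compensates for the combinatorial growth at fixed $d$. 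Alternatively, one could invoke a Bandeira–van Handel-type inequality whose leading term scales like $\sigma_*\sqrt{\log N}$ rather than $\sigma^2$, after accounting for the hyperedge dependence.
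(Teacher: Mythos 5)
First, note that the paper itself contains no proof of this lemma to compare against: it is imported verbatim as Theorem~4 of \cite{lee_kim_chung_2020}, so your attempt has to be measured against the argument in that reference. Within your proposal, the reduction is correct and the diagnosis is exactly right: the decomposition $\bm{\Delta} = \sum_{e}(f_e-\E f_e)\bA_e$ into independent, mean-zero, clique-supported summands with $\|\bA_e\|_2 = d-1$, the variance computation $\sigma^2 = O(\log N)$ (your identity $\bA_e^2 = (d-2)\bJ_e + \bI_e$ and the incidence counts are right), and, most importantly, the observation that matrix Bernstein cannot deliver $C\sqrt{\log N}$ at confidence $1-O(N^{-11})$: since $\sigma^2 \asymp \log N$, the exponent at $t = C\sqrt{\log N}$ is $O(1)$, the dimensional prefactor $N$ makes the bound vacuous, and Bernstein only certifies $O(\log N)$. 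This is precisely why the sharp version of this concentration result is nontrivial.

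The genuine gap is that everything after this diagnosis is a plan, not a proof. The estimate $\E\operatorname{tr}(\bm{\Delta}^{2k}) \le N\,(C'\sqrt{\log N})^{2k}$ for $k = \Theta(\log N)$ \emph{is} the theorem; you assert it and describe how one would organize the walk count, but do not establish it. In this sparsity regime (expected degree $\Theta(\log N)$), this is exactly where the classical F\"uredi--Koml\'os counting breaks down naively: closed walks that reuse a small number of hyperedges many times are not automatically dominated by tree-like walks, and on top of that one must track that a single weight $f_e$ feeds all $\binom{d}{2}$ pairs inside $e$, so the multiplicity bookkeeping is by hyperedge rather than by pair --- carrying this out rigorously is a substantial combinatorial argument in its own right, not a routine adaptation. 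Your fallback, a Bandeira--van Handel-type inequality, also does not apply off the shelf: those bounds require independent entries (up to symmetry), whereas the entries of $\bW_f$ are dependent through shared hyperedges, which is the very feature you flag but do not resolve. For comparison, the proof in \cite{lee_kim_chung_2020} avoids high moments altogether and runs the Friedman--Kahn--Szemer\'edi/Feige--Ofek scheme: an $\epsilon$-net over the unit sphere, a split of the bilinear form $\bx^T\bm{\Delta}\by$ into light and heavy pairs, Bernstein's inequality for the light pairs, and discrepancy-type counting bounds for the heavy pairs; that is how the $\sqrt{\log N}$ scale and the polynomial failure probability $O(N^{-11})$ are obtained simultaneously. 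In short: correct setup, correct identification of the obstacle, but the decisive estimate is missing.
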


Using the lemma, for the $m$-th layer of $\hsbm(N,M,(d_m),(\alpha_{\bt}^{(m)}))$ we have that 
\begin{align*}
 \P\left(\| \bW^{(m)} - \E \bW^{(m)} \|_2
 \le C^{(m)}\sqrt{\log N} \right)
 &\wge 1 - O(N^{-11}),
\end{align*}
where $C^{(m)} = C^{(m)}(d_m, \max_r \alpha^{(m)}_{(r, d_m - r)})$.
To obtain a similar bound for the aggregate similarity matrix, we let $C = \max_m C^{(m)}$ and write
\begin{align*}
    \P\left(\| \bW - \E \bW \|_2
 \le CM\sqrt{\log N} \right)
 & \weq \P\left(\Big\| \sum_{m=1}^{M} \left(\bW^{(m)} - \E \bW^{(m)} \right)\Big\|_2
 \le CM\sqrt{\log N} \right)\\
 & \wge \P\left(\sum_{m=1}^{M} \|  \bW^{(m)} - \E \bW^{(m)} \|_2
 \le CM\sqrt{\log N} \right)\\
  & \wge \P\left(\|  \bW^{(m)} - \E \bW^{(m)} \|_2
 \le C\sqrt{\log N}  , \ \forall m \in[M]\right)\\
 & \weq \prod_{m=1}^{M}\P\left(\| \bW^{(m)} - \E \bW^{(m)} \|_2
 \le C\sqrt{\log N} \right)\\
 &\wge 1 - O(MN^{-11})\\
 &\weq 1 - O(N^{-11}).
\end{align*}

\subsection{Lower bound on \texorpdfstring{$D_{ii}$}{the diagonal entries of the Laplacian}}
\label{sec:lower_bound}

\begin{lemma}
\label{lem:lower_bound}
    Let $I>1$. Then there exists a constant $\epsilon > 0$ dependent on model parameters such that for all $i \in [N]$,
    \begin{align*}
        \P(D_{ii} \le \epsilon \log N) = o(N^{-1}).
    \end{align*}
\end{lemma}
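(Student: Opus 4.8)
The plan is to reduce the claim to a Chernoff (exponential Markov) bound on the lower tail of $D_{ii}$ and to recognise the resulting exponential rate as the quantity $I$ from \eqref{eq:info_metric_layers}. Throughout I work conditionally on $\bsigma$, which has exactly $N/2$ nodes per community; by the symmetric assumption the law of $D_{ii}$ is the same for every $i$ and independent of $\sigma_i$, so I fix $\sigma_i=+1$. Expanding $D_{ii}=s\sigma_i\sum_{j\neq i}\sigma_j\sum_m\sum_{e\ni i,j}A_e^{(m)}$ and grouping the hyperedges $e\ni i$ according to their layer $m$ and the number $r$ of opposite-community nodes in $e\setminus\{i\}$, every present hyperedge of this type contributes $s(d_m-1-2r)$, whence
\[
 D_{ii}\weq s\sum_{m=1}^M\sum_{r=0}^{d_m-1}(d_m-1-2r)\,N_r^{(m)},
\]
where $N_r^{(m)}\sim\bin\!\big(\binom{N/2}{r}\binom{N/2-1}{d_m-1-r},\,p_{(r,d_m-r)}^{(m)}\big)$ and the $N_r^{(m)}$ are mutually independent (distinct pairs $(m,r)$ index disjoint families of hyperedges). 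Since $\binom{N/2}{r}\binom{N/2-1}{d_m-1-r}/\binom{N-1}{d_m-1}\to 2^{-(d_m-1)}\binom{d_m-1}{r}$, the means obey $\E N_r^{(m)}=(\mu_r^{(m)}+o(1))\log N$ with $\mu_r^{(m)}:=2^{-(d_m-1)}\binom{d_m-1}{r}\alpha_{(r,d_m-r)}^{(m)}$.

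Next I apply the exponential Markov inequality: for any $\lambda>0$,
\[
 \P(D_{ii}\le\epsilon\log N)\ \le\ e^{\lambda\epsilon\log N}\,\E e^{-\lambda D_{ii}}
 \weq e^{\lambda\epsilon\log N}\prod_{m,r}\E\exp\!\big(-\lambda s(d_m-1-2r)N_r^{(m)}\big).
\]
Because each $N_r^{(m)}$ is binomial with success probability $p_{(r,d_m-r)}^{(m)}=\Theta(\log N/N^{d_m-1})\to 0$ and mean $\Theta(\log N)$, the expansion $n\log(1+p(e^t-1))=np(e^t-1)+O(np^2)$ with $np^2=o(\log N)$ gives $\log\E e^{tN_r^{(m)}}=\E N_r^{(m)}(e^t-1)+o(\log N)$. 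Writing $\theta:=\lambda s$ and summing the finitely many terms,
\[
 \log\E e^{-\lambda D_{ii}}\weq -\big(g(\theta)+o(1)\big)\log N,\qquad
 g(\theta):=\sum_{m,r}\mu_r^{(m)}\big(1-e^{-\theta(d_m-1-2r)}\big),
\]
and $I=\sup_{\theta\in\R}g(\theta)$ by \eqref{eq:info_metric_layers}. Hence $\P(D_{ii}\le\epsilon\log N)\le N^{\lambda\epsilon-g(\theta)+o(1)}$.

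It remains to pick $\lambda$ (equivalently $\theta$) and $\epsilon$. The crucial structural fact is that $g$ is concave with $g(0)=0$ and $g'(0)=\sum_{m,r}\mu_r^{(m)}(d_m-1-2r)=\xi$, matching \eqref{eq:Assortativity} and Proposition~\ref{prop:assortativity_W}. Consequently $g$ is positive only on the side of the origin selected by $\operatorname{sgn}(\xi)=s$. Since $I>1$, continuity yields a finite $\theta^*$ with $g(\theta^*)>1$, and this $\theta^*$ must satisfy $\operatorname{sgn}(\theta^*)=s$; therefore $\lambda^*:=s\theta^*>0$ is an admissible choice in the Chernoff bound. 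Taking $\epsilon$ small enough that $\lambda^*\epsilon-g(\theta^*)<-1$ (possible as $g(\theta^*)>1$) gives $\P(D_{ii}\le\epsilon\log N)\le N^{-1-\delta+o(1)}=o(N^{-1})$ for some $\delta>0$, uniformly in $i$.

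The delicate points are twofold. First is the sign-consistency step: one must ensure that the optimiser $\theta^*$ of $g$ lies on the half-line reachable by $\lambda>0$ once the algorithm has committed to $s=\operatorname{sgn}(\xi)$; this is exactly where the identity $g'(0)=\xi$ together with concavity is indispensable, and it is the analytic manifestation of the assortative/disassortative dichotomy (for $\xi<0$ the maximiser sits at $\theta^*<0$, which is precisely why the $\sup$ in \eqref{eq:info_metric_layers} ranges over all of $\R$). Second is making the cumulant approximation uniform: one has to control the binomial-to-Poisson error as $o(\log N)$ across all (finitely many) pairs $(m,r)$, justify the hypergeometric-to-binomial limit for $\E N_r^{(m)}$, and check that the truncation $\wedge 1$ in \eqref{eq:regime} is inactive for large $N$. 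The remaining large-deviations bookkeeping is routine.
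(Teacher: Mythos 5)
Your proof is correct and follows essentially the same route as the paper's: the same decomposition of $D_{ii}$ into independent binomial counts indexed by layer and community profile, the same Chernoff bound with rate function identified as $\psi=g$, and the same concavity/$\psi'(0)=\xi$ argument showing the maximizer lies on the half-line selected by $s=\operatorname{sgn}(\xi)$. The only cosmetic difference is that you control the binomial log-MGF by a Poisson-type expansion with $o(\log N)$ error, where the paper uses the one-line bound $1-x\le e^{-x}$ to get $\E e^{-\lambda D_{ii}}\le e^{-(1+o(1))\psi(s\lambda)\log N}$; both are adequate.
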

\begin{proof}
Let $\cE_m$ denote the set of all node sets of size $d_m$.
We can write $D_{ii}$ in \eqref{eq:DefD} as
\[
 D_{ii}
 \weq s\sum_m \sum_{j: j \ne i} \sum_{e \in \cE_m: e \ni i,j} A_e^{(m)} \sigma_i \sigma_j
 \weq s \sum_m \sum_{e \in \cE_m: e \ni i} A_e^{(m)} \sum_{j \in e \setminus \{i\}}  \sigma_i \sigma_j.
\]
We will split the sum on the right based on the community profile of the node set $e \setminus \{i\}$. 
Denote by $\cT_{d_m-1}$ the set of vectors $\bt^{(m)} = (t^{(m)}_{-1},t^{(m)}_{+1})$ with nonnegative integer-valued coordinates summing up to $t^{(m)}_{-1}+t^{(m)}_{+1}=d_m-1$.  For each $\bt^{(m)} \in \cT_{d_m-1}$, denote by $\cE_{i,\bt^{(m)}}$ the collection of node sets $e$ of size $d$ such that $e \ni i$ and such that the number of nodes $j \in e \setminus \{i\}$ with community membership $\sigma_j=k$ equals $t^{(m)}_k$ for $k = \{-1, +1\}$.  Then for any node $i$ with block membership $\sigma_i=k$ and 
any $e \in \cE_{i,\bt^{(m)}}$,
\[
 \sum_{j \in e \setminus \{i\}} \sigma_i \sigma_j
 \weq t^{(m)}_k - t^{(m)}_{-k}.
\]
Therefore, for any $i$ with block membership $\sigma_i=k$, we find that
\begin{equation}    
 D_{ii}
 \weq s\sum_m \sum_{\bt^{(m)} \in \cT_{d_m-1}} \sum_{e \in \cE_{i,\bt^{(m)}}} A_e^{(m)} (t^{(m)}_k-t^{(m)}_{-k})
 \weq s \sum_m \sum_{\bt \in \cT_{d_m-1}} (t^{(m)}_k-t^{(m)}_{-k}) Y^{(m)}_{i,\bt^{(m)}},
 \label{eq:Dii_as_Y}
\end{equation}
where $Y^{(m)}_{i,\bt^{(m)}} = \sum_{e \in \cE_{i,\bt^{(m)}}} A_e^{(m)}$ equals the number of hyperedges $e$ in layer $m$ that contain $i$ and for which the $i$-excluded community profile equals $\bt^{(m)} \in \cT_{d_m-1}$.  For any such $e$,
the full community profile equals $\bt^{(m)} + \be_k$, where $\be_k$ is a basis vector for the coordinate $k\in \{-1,1\}$.  Furthermore, the size of the set $\cE_{i,\bt^{(m)}}$ equals
$R_{k,\bt^{(m)}} \coloneqq |\cE_{i,\bt^{(m)}}| = \binom{\frac{N}{2}-1}{t^{(m)}_k} \binom{\frac{N}{2}}{t^{(m)}_{-k}}$.
It follows that the random variables $Y^{(m)}_{i,\bt^{(m)}}$ are mutually independent and binomially distributed according to
$Y^{(m)}_{i,\bt^{(m)}} \sim \bin(R_{k,\bt^{(m)}}, p_{\bt^{(m)} + \be_k}^{(m)})$.
Fix $\lambda \ge 0$. By independence and the inequality $1-x \le e^x$, we find that the moment-generating function of $D_{ii}$ is bounded by
\begin{align*}
 \E \left[e^{-\lambda D_{ii}}\right]
 &\weq \prod_{m} \prod_{\bt^{(m)} \in \cT_{d_m-1}} \E\left[e^{-s\lambda (t^{(m)}_k - t^{(m)}_{-k})
 Y_{i,\bt^{(m)}}^{(m)}} \right] \\
 &\weq \prod_m \prod_{\bt^{(m)} \in \cT_{d_m-1}}
 \left[1-p_{\bt+\be_k}^{(m)} \left(1-e^{-s\lambda(t^{(m)}_k - t^{(m)}_{-k})}\right)\right]^{R_{k,\bt^{(m)}}} \\
 &\wle \prod_{\bt \in \cT_{d_m-1}} \prod_m \exp
 \left[-R_{k,\bt^{(m)}} p_{\bt^{(m)}+\be_k}^{(m)}
 \left( 1-e^{-s\lambda (t^{(m)}_k - t^{(m)}_{-k})} \right) \right].
\end{align*}
Using the bounds $\left(1-\frac{j}{n}\right)^j \frac{n^j}{j!} \le \binom{n}{j} \le \frac{n^j}{j!}$
and the scaling assumption \eqref{eq:regime}, 
we find that 
\begin{equation}
    R_{k,\bt^{(m)}} p_{\bt^{(m)}+\be_k}^{(m)} = (1 + o(1)) 2^{-(d_m-1)}\binom{d_m-1}{t^{(m)}_{-k}}\alpha_{\bt^{(m)}+\be_k}^{(m)} \log N.
\label{eq:Rp}
\end{equation}

We conclude that
\[
\E \left[e^{-\lambda D_{ii}}\right]
 \wle e^{-(1+o(1)) \psi_k(s\lambda) \log N},
\]
where, for $x\in \R$,
\begin{equation}
 \psi_k(x)
 \wcoloneqq  \sum_m 2^{-(d_m-1)}\sum_{\bt \in \cT_{d_m-1}} \binom{d_m-1}{t^{(m)}_{-k}}\alpha_{\bt^{(m)}+\be_k}^{(m)}\left( 1-e^{-x (t^{(m)}_k - t^{(m)}_{-k})} \right).\nonumber
\end{equation}
For the inner summation, taking $t^{(m)}_{-k} = r$, we have that $t^{(m)}_k = d_m-1-r$ and $\alpha_{\bt^{(m)}+\be_k}=\alpha_{(r,d_m-r)} = \alpha_{(d_m-r,r)}$, thus giving 
\begin{equation}
 \psi_k(x)
 \weq  \sum_{m} 2^{-(d_m-1)}\sum_{r=0}^{d_m-1} \binom{d_m-1}{r}\alpha_{(r,d_m-r)}^{(m)}\left(1-e^{-x\left(d_m - 1 - 2r\right)}\right) \ \eqqcolon \ \psi(x).
\nonumber
\end{equation}
Note that the above expression is independent of the community of node $i$ owing to the symmetry inherent in our model.
Markov's inequality applied to the random variable $e^{-\lambda D_{ii}}$ then implies that for any $\epsilon > 0$,
\begin{equation}
 \label{eq:DiiBound}
 \P(D_{ii} \le \epsilon \log N)
 \wle e^{\lambda \epsilon \log N} \E e^{-\lambda D_{ii}}
 \wle N^{\lambda \epsilon - (1+o(1))\psi(s\lambda)}.
\end{equation}
We note that $\psi(x)$ is a concave function with $\psi(0) = 0$ and
\begin{equation}
 \psi'(0)
 \weq  \sum_m 2^{-(d_m-1)}\sum_{r = 0}^{d_m-1} \binom{d_m-1}{r}(d_m-1-2r)\alpha_{(r, d_m - r)}^{(m)} = \xi,
 \nonumber
\end{equation}
where $\xi$ is the assortativity defined by \eqref{eq:Assortativity}.
Letting $s = \sgnxi$, it follows that
$$
 I \coloneqq \sup_{x \in \R} \psi(x)
 = \sup_{\lambda \ge 0} \psi(s\lambda),
$$
where $I$ is the information quantity defined by \eqref{eq:info_metric_layers}. Given $d_m\geq 2$, we note that $-(d_m - 1 - 2r)$ is positive for at least one $0\leq r \leq d_m-1$, and the corresponding term in $\psi(x)$ decreases to $-\infty$ as $x$ increases to $\infty$. On the other hand, $-(d_m - 1 - 2r)$ is negative for at least one $0\leq r \leq d_m-1$, and the corresponding term decreases to $-\infty$ as $x$ decreases to $-\infty$. Moreover, all of the terms are bounded from above. It follows that $\psi(x)$ attains its supremum on $\R$. If we assume that $I > 1$
and choose a small enough $\epsilon > 0$, then \eqref{eq:DiiBound} implies that
\begin{equation}
\P(D_{ii} \le \epsilon \log N) \wle N^{\lambda^* \epsilon - (1+o(1))I}
\weq o(N^{-1}),
 \nonumber
\end{equation}
where $\lambda^* = \arg\max_{\lambda \ge 0} \psi(s\lambda)$.
\end{proof}

\subsection{Assortativity}
\label{sec:assortativity}

In this section, we provide an interpretation of assortativity in terms of the aggregate similarity matrix $\bW$ in the asymptotic regime. This is stated in Proposition \ref{prop:assortativity_W}, which we prove below.

\begin{proof}[Proposition \ref{prop:assortativity_W}]
    First, we note that for $k\in \{-1,1\}$ and $i\neq j$
     $$w_{\rm{in}}
 = \E[ W_{ij} \mid \sigma_i = k, \sigma_j = k]
 = \sum_{m} \sum_{\bt^{(m)}\in \cT_{d_m-2}}
 \binom{N/2-2}{t^{(m)}_{k}} \displaystyle \binom{N/2}{t^{(m)}_{-k}} p_{\bt^{(m)}+\be_k+\be_{k}}^{(m)},$$
 where $\cT_{d_m-2}$ is defined as in Section \ref{sec:lower_bound}, and 
 \begin{align*}
 w_{\rm{out}}
 = \E[ W_{ij} \mid \sigma_i = k, \sigma_j = -k] = \sum_{m}\sum_{\bt^{(m)}\in \cT_{d_m-2}} \hspace{-0.4mm}\binom{N/2-1}{t^{(m)}_{k}} \displaystyle \binom{N/2-1}{t^{(m)}_{-k}} p_{\bt^{(m)}+\be_k+\be_{-k}}^{(m)}.
\end{align*}
Applying the bounds $\left(1-\frac{j}{n}\right)^j \frac{n^j}{j!} \le \binom{n}{j} \le \frac{n^j}{j!}$
and the scaling assumption \eqref{eq:regime}, 
\begin{align*}
 w_{\rm{in}} &= \frac{\log N}{N}\cdot \sum_{m}\frac{(1 + o(1))(d_m-1)}{2^{d_m-1}} \sum_{\bt^{(m)}\in \cT_{d_m-2}}\binom{d_m-2}{t^{(m)}_{-k}} \alpha_{\bt^{(m)}+\be_k+\be_{k}}^{(m)} 
 = \Theta\left(\frac{\log N}{N}\right).
\end{align*}
Similarly, $w_{\rm{out}} = \Theta(\log N/N)$. By \eqref{eq:DefD}, for two communities of equal size we have
\begin{align}
\E D_{ii}
&= \sum_{j\neq i: \sigma_i = \sigma_j} w_{\rm{in}} - \sum_{j: \sigma_i \neq \sigma_j} w_{\rm{out}}%
 = \frac{N}{2}\left(w_{\rm{in}} - w_{\rm{out}}\right)  - o(1). \label{eq:diag_i}
\end{align}

Using (\ref{eq:Dii_as_Y}) and (\ref{eq:Rp}), the expected value of $D_{ii}$ can also be written as
\begin{align}
    \E D_{ii}
    &= (1 + o(1)) \xi\log N
  > 0 \nonumber
\end{align}

which combined with \eqref{eq:diag_i} implies the statement of the proposition.
\end{proof}

\subsection{Proof of Theorem \ref{thm:main}}
\label{sec:proof}
Lemma \ref{lem:lower_bound} shows that $D_{ii} \le \epsilon \log N$ with probability $o(N^{-1})$. Taking union bound over $i$, we obtain $\min_{i\in [N]} D_{ii} \le \epsilon \log N$ with probability $o(1)$. By Lemma \ref{lem:lkc20}, $\| \bW - \E \bW \|_2 \le CM\sqrt{\log N}$ with probability $1 - O(N^{-11})$. Moreover, $w_{\rm in} = \Theta(N^{-1}\log N)$ as shown in the proof of Proposition \ref{prop:assortativity_W}. By Lemmas \ref{lem:Sispsd} and \ref{lem:Sispsd2}, we then have $\bx^{\text{T}} \bS \bx \geq \epsilon \log N - CM\sqrt{\log N} - N^{-1}\log N> 0$ with probability $o(1)$ for all $\bx \bot \bsigma$ such that $\| \bx \|_2 = 1$. Application of Lemma \ref{lem:dual} then concludes the proof.

\section{Conclusions}
\label{sec:conclusion}
In this work, we motivated and described the non-uniform multilayer inhomogeneous HSBM. We studied the problem of exact community recovery for the model using an SDP approach and the aggregate similarity matrix. For the symmetric case, our analysis provided a sufficient condition in terms of the information quantity $I$ for community recovery. The generality of our model allows us to recover the sufficient conditions for some earlier models proposed in the literature.

Our treatment of the problem brings to the fore numerous related questions which are listed below:
\begin{itemize} \setlength{\itemsep}{2mm}

\item The assumption of symmetry on the parameters could be relaxed to make the hyperedge probabilities depend on the community labels. Additionally, it could be worthwhile to investigate asymmetry brought about by an imbalance in the community sizes.
\item This work provides sufficient conditions for exact recovery based on the SDP approach. Necessary conditions for the multilayer HSBM model with the knowledge of the similarity matrix can be obtained using a methodology similar to \cite{Kim_Bandeira_Goemans_2018-07-08} which will be addressed in a future publication. 
   
\item In this paper, the number of layers, $M$, is taken to be a constant independent on $N$. However, we expect that the analysis goes through when $M$ grows slowly with $N$.

\item The analysis of the SDP algorithm used here relies on the fact that there are just two communities. Extensions to a larger number of communities is a question worthy of investigation.

\end{itemize}

\bibliographystyle{splncs04}
\bibliography{main.bib}

\end{document}